\newcommand{\bC}{{\Bbb C}}
\newcommand{\bR}{{\Bbb R}}
\newcommand{\bN}{{\Bbb N}}
\newcommand{\bA}{{\Bbb A}}
\newcommand{\bp}{{\bf p}}
\newcommand{\bq}{{\bf q}}
\newcommand{\spec}{{\rm{Spec}}}
\newcommand{\val}{{\rm{val}}}
\let \cedilla =\c
\renewcommand{\o}[0]{{\mathcal O}} 
\newcommand{\ta}{{\widetilde{\a}}}
\renewcommand{\a}{{\frak{a}}}
\renewcommand{\b}{{\frak{b}}}
\newcommand{\lct}{{\rm{lct}}}
\newcommand{\mld}{{\rm{mld}}}
\newcommand{\wA}{{\widehat{A}}}
\newcommand{\ha}{{\widehat{\a}}}
\newcommand{\mult}{{\rm{mult}}}
\newcommand{\m}{{\frak{m}}}
\def\to {\longrightarrow}
\newtheorem{thm}{Theorem}[section]
\newtheorem{lem}[thm]{Lemma}
\newtheorem{cor}[thm]{Corollary}
\newtheorem{prop}[thm]{Proposition}
\theoremstyle{definition}
\newtheorem{defn}[thm]{Definition}
\newtheorem{exmp}[thm]{Example}
\newtheorem{conj}[thm]{Conjecture}
\newtheorem{rem}[thm]{Remark}
\title{ The minimal log discrepancies on a smooth surface\\
 in positive characteristic}
\author{Shihoko Ishii}
\begin{document}
\date{}

\maketitle
\footnote{The author is partially supported by JSPS 19K03428}

\begin{abstract} 
%\noindent
This paper shows that Musta\cedilla{t}\v{a}-Nakamura's conjecture holds 
for pairs consisting of a smooth surface and a multiideal with a real exponent
 over the base field of positive 
characteristic.
As corollaries, we  obtain the ascending chain condition of the minimal
log discrepancies and of the log canonical thresholds for those pairs.
We also obtain finiteness of the set of the minimal log discrepancies of those pairs
for a fixed real exponent.
\end{abstract}
\section{Introduction}
%\noindent
   Two invariants, the minimal log discrepancies and the log canonical thresholds, for a 
    singularity of a pair consisting of a variety and a multiideal with a real exponent  
    play important roles in   
    birational geometry.
     For example, the ascending chain condition (ACC, for short) of these invariants 
     under certain conditions would 
     give a significant step in a proof of the Minimal Model Problem (MMP, for short).
     ACC Conjecture for log canonical thresholds over the base field of characteristic 0 
     is proved in \cite{hmx}, 
     while  not yet  over the base field of positive characteristic.
     On the other hand, ACC Conjecture 
     for  minimal log discrepancies 
     is not proved even in characteristic 0. \\    
  \ \ 
     Musta\cedilla{t}\v{a} and Nakamura (\cite{mn}) posed a conjecture, 
     say Musta\cedilla{t}\v{a}-Nakamura's conjecture (MN Conjecture, for short) and proved
     that it implies ACC Conjecture for the minimal log discrepancies 
     for the base field of characteristic 0.
     Then, Kawakita (\cite{kawk2}) proved the converse also holds for dimension 3 in characteristic 0.
     In the same paper he proved ACC for dimension 2 in characteristic 0.
     Aside ACC, MN Conjecture plays important roles on basic properties of singularities
     (openess of good singularities, stability of good singularities under a deformation, etc.,
     see, for example \cite{findet}).\\
\ \      
     In this paper, we focus on pairs $(A, \a^e)$ consisting of a smooth surface $A$ 
     and  a  non-zero multiideal $\a^e=\a_1^{e_1}\cdots\a_s^{e_s}$ 
     ($\a_i$'s are non-zero coherent ideal sheaves on $A$) with an exponent
     $e=\{e_1,\ldots,e_s\}$ $(e_i\in \bR_{>0})$ on $A$ defined over an algebraically closed 
     base field $k$ of arbitrary characteristic.\\
 \ \     
          As our interest is in a smooth surface $A$, 
     we state the conjectures on smooth varieties, although the primary conjectures are 
     stated under more general settings:
     
 \begin{conj}[MN Conjecture] 
  Let $A$ be a smooth variety of dimension $N$ defined over 
     an  algebraically closed field $k$ and let $0\in A$ be a closed point. 
     Given a finite subset 
     $e\subset \bR_{>0}$, there
     is a positive integer $\ell_{N,e}$ (depending on $N$ and $e$) such that for every multiideal 
     $\a^e$ on $A$ with the
    exponent  $e$, there is a prime divisor $E$ that computes $\mld(0; A,  \a^e)$ and 
    satisfies
    $k_E \leq \ell_{N,e}$.
 \end{conj}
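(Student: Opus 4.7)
The plan is to prove Musta\cedilla{t}\v{a}-Nakamura's conjecture in dimension $N=2$ over an algebraically closed field of arbitrary characteristic, which is the setting promised by the paper. Because resolution of singularities is available for surfaces in any characteristic, every prime divisor $E$ over a closed point $0\in A$ arises as the last exceptional divisor of a finite sequence of point blowups
\[
A=A_0\leftarrow A_1\leftarrow\cdots\leftarrow A_n,
\]
where each $A_{i+1}\to A_i$ blows up a single closed point $p_i\in A_i$ lying over $0$. The invariant $k_E$ and each valuation $v_E(\a_j)$ are encoded combinatorially by this tree together with the multiplicities $m_{i,j}:=\mult_{p_i}(\a_j^{(i)})$ of the successive weak transforms; in particular $k_E$ is controlled by the length $n$ and the proximity structure of the tree.

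The first step is to record the a priori bound coming from $E_0$, the exceptional divisor of $A_1\to A$: since $a_{E_0}(A,\a^e)=2-\sum_je_j\mult_0(\a_j)$, any $E$ computing $\mld(0;A,\a^e)$ satisfies $a_E\le a_{E_0}$, which combined with $a_E\ge 0$ gives two-sided control on the quantity $k_E-\sum_je_jv_E(\a_j)$. The real task is to bound $k_E$ itself in terms of only the finite set $e$, uniformly over all choices of ideals $\a_j$.

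My strategy for this is by contradiction: assume there exist pairs $(A,(\a^{(\nu)})^e)$ with computing divisors $E^{(\nu)}$ such that $k_{E^{(\nu)}}\to\infty$. A Noetherian/compactness argument on the blowup trees should extract, in a limit sense, an infinite sequence of point blowups along which every $\a_j^{(i)}$ has strictly positive multiplicity at $p_i$. This would contradict the classical theory of equimultiple sequences of ideals on a regular surface (Zariski, Abhyankar), which is valid in any characteristic and forces the multiplicity to drop to zero after finitely many point blowups. Equivalently, since $e$ is a finite subset of $\bR_{>0}$ and the a priori bounds restrict the admissible tuples $(m_{i,j})$ at each level to finitely many possibilities, one expects that along a sufficiently long tree some intermediate divisor $E'$ already satisfies $a_{E'}\le a_E$ with $k_{E'}$ much smaller than $k_E$, contradicting the choice of $E$.

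The main obstacle I foresee is precisely this replacement step: producing, uniformly in the data, an earlier divisor $E'$ with at most the same log discrepancy. In characteristic zero Kawakita exploits generic formal arcs, jet-scheme dimension, and differential operators, tools which behave badly or are simply unavailable in positive characteristic. The substitute I expect to use is a purely combinatorial analysis of the blowup tree that relies only on the finiteness of $e$, on the two-sided bounds on $a_E$, and on the fact that on a smooth surface each weak transform still lives in a regular two-dimensional local ring, so that elementary multiplicity inequalities at each blowup are available in any characteristic. Making this analysis characteristic-neutral and uniform in the ideals $\a_j$ is where I expect the technical heart of the proof to lie.
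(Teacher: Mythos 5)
The central step of your plan --- producing, uniformly in the ideals, an earlier divisor $E'$ with $a_{E'}\le a_{E}$ and much smaller $k_{E'}$ --- is exactly the statement to be proved, and the two devices you offer for it do not close the gap. The appeal to equimultiple sequences (Zariski/Abhyankar) concerns a \emph{fixed} ideal: for a single $\a_j$ the multiplicity of its weak transforms does drop after finitely many point blowups, but the number of blowups needed depends on $\a_j$, whereas the conjecture demands a bound depending only on $e$. The ``Noetherian/compactness'' extraction across a sequence of different pairs $(\a^{(\nu)})^e$ is not available as stated: there is no Noetherian or compact structure on the set of all multiideals on $A$ that yields a limit ideal with everywhere-positive multiplicities along an infinite tree (in characteristic $0$ this role is played by generic limits and jet/arc techniques, precisely the tools you set aside). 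Moreover, your claim that the admissible tuples $(m_{i,j})$ are finite in number only follows at the first level (from $a_{E_0}\ge 0$, and only when $\mld(0;A,\a^e)\ge 0$; the case $\mld=-\infty$ needs separate treatment); at later levels the transformed pair acquires exceptional components with coefficients, and the same multiplicity bound does not immediately propagate. So what you have is a program whose technical heart is missing, not a proof.

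For comparison, the paper does not analyze blowup trees at all. It first shows (Lemma \ref{weighted}) that a divisor computing $\mld(0;A,\a^e)$ can be taken to come from a single weighted blow up: Kawakita's characteristic-$0$ argument goes through once the required inversion of adjunction is supplied in positive characteristic by the jet-scheme result of Ishii--Reguera. Since such a divisor defines a monomial valuation on $\widehat{\o}_{A,0}\simeq k[[x_1,x_2]]$, the ideals may be replaced by the monomial ideals generated by all monomials occurring in their elements without changing the mld, which reduces the problem to monomial multiideals on $\bA_k^2$; that case is handled by comparing toric log resolutions over $k$ and over $\bC$ (Lemma \ref{toric}), where the bound $\ell_{2,e}$ is already known. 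If you wish to salvage your route, you would need a characteristic-free, ideal-independent bound on the length of the relevant blowup sequences; the weighted-blow-up reduction is the paper's substitute for exactly that difficulty.
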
    
 
 \begin{conj}[ACC Conjecture for mld]
   Let $A$, $N$, $e$ and $0$   be as above. 
  For every fixed DCC set $J \subset \bR_{>0}$, the set
$$\{\mld(0; A, \a^e) \mid  e\subset J, (A, \a^e)\ \mbox{is\ log\ canonical\ at\ }
      0\}  $$
  satisfies ascending chain condition (ACC).     
 
 \end{conj}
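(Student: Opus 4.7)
The plan is to derive Conjecture 1.2 (restricted to smooth surfaces in positive characteristic) as a corollary of Conjecture 1.1, which this paper establishes in that setting. Suppose for contradiction that there is a strictly increasing sequence $\mld_n := \mld(0_n; A_n, \a_n^{e(n)})$ with each $e(n) \subset J$ and each pair log canonical at $0_n$. Since $\mld$ depends only on the formal completion, I would first replace each $A_n$ by $\spec k[[x,y]]$ and work on a single ambient surface with closed point $0$. By Conjecture 1.1, each $\mld_n$ is computed by a prime divisor $E_n$ over $0$ with $k_{E_n} \leq \ell_{2, e(n)}$, and
\[ \mld_n = 1 + k_{E_n} - \sum_i e_i^{(n)}\, \ord_{E_n}(\a_i^{(n)}). \]

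Next, I would reduce to a fixed finite exponent set. Because $J$ is DCC, it has a positive minimum $\gamma$. For each $n$, deleting those $\a_i^{(n)}$ with $\ord_{E_n}(\a_i^{(n)}) = 0$ leaves $\mld_n$ (computed by $E_n$) unchanged; after this pruning, the inequality $\sum_i e_i^{(n)}\, \ord_{E_n}(\a_i^{(n)}) \leq 1 + k_{E_n}$ (from $\mld_n \geq 0$) forces $|e(n)| \leq (1+k_{E_n})/\gamma$. Combining this observation with an iterated use of MN and the DCC structure of $J$, one passes to a subsequence on which $e(n) = e$ is constant, and then Conjecture 1.1 yields a uniform bound $k_{E_n} \leq \ell := \ell_{2,e}$.

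With $e$ and $\ell$ fixed, the prime divisors $E$ over $0 \in \spec k[[x,y]]$ with $k_E \leq \ell$ form a finite set: they correspond, via the tree of infinitely near points, to terminal vertices of at most finitely many successive point blow-ups, since $k_E$ grows strictly with the depth of extraction. Passing to a further subsequence, assume $E_n = E$ is fixed. Then
\[ \mld_n = 1 + k_E - \sum_i e_i\, \ord_E(\a_i^{(n)}), \]
where only the non-negative integers $\ord_E(\a_i^{(n)})$ vary, constrained by $\sum_i e_i \ord_E(\a_i^{(n)}) \leq 1 + k_E$. Hence $\mld_n$ takes only finitely many values, contradicting strict monotonicity.

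The main obstacle is the reduction to a fixed exponent set in the second step. Because $\ell_{2,e(n)}$ depends on $e(n)$ (and in particular on its cardinality), the estimate $|e(n)| \leq (1+k_{E_n})/\gamma$ does not immediately decouple to give uniform control; one needs a careful bootstrapping argument, applying MN to the pruned pair to sharpen the divisor bound before extracting the constant exponent subsequence. The remaining steps — formal uniformization, finiteness of divisors of bounded $k_E$ on a smooth surface, and the final discreteness of $\mld$ values — are classical on surfaces and should go through unchanged in positive characteristic.
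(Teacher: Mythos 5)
Your approach is genuinely different from the paper's, and it has a gap at exactly the point the paper is designed to avoid. The paper does not prove ACC by running the ``MN implies ACC'' argument in positive characteristic; it explicitly notes that this implication (Proposition \ref{equiv}) is only known in characteristic $0$, and instead uses part (ii) of Theorem \ref{mainthm}: each $\mld(0;A,\a^e)$ equals $\mld(0;\bA_\bC^2,\ta_*^e)$ for a monomial multiideal over $\bC$ with the \emph{same} exponent $e$, so an increasing sequence of mld's in positive characteristic becomes an increasing sequence of mld's of pairs over $\bC$, where ACC is already known. Your proposal tries instead to rederive the implication from Conjecture 1.1 directly, and the step you yourself flag as ``the main obstacle'' --- passing to a subsequence on which $e(n)=e$ is constant --- is not just unproven but false as stated: a DCC set $J\subset\bR_{>0}$ can be infinite (e.g.\ a strictly increasing sequence), so the finite subsets $e(n)\subset J$ need not admit any constant subsequence. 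The correct Musta\cedilla{t}\v{a}--Nakamura argument only arranges the exponents to be coordinatewise non-decreasing after fixing the (bounded) number of relevant ideals, the orders, and $k_E$, and even then it needs a bound on $k_{E_n}$ that is uniform in $n$, whereas your bound $\ell_{2,e(n)}$ varies with $e(n)$; resolving this is precisely the nontrivial content of the implication, which your sketch does not supply.

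A secondary error: the prime divisors $E$ over a closed point of a smooth surface with $k_E\leq\ell$ do \emph{not} form a finite set --- already for $k_E=2$ one gets a one-parameter family by blowing up any point of the first exceptional curve --- so you cannot pass to a subsequence with $E_n$ fixed. This particular step is repairable, since with $e$ fixed, $k_{E_n}\leq\ell$ a bounded non-negative integer, and $\sum_i e_i\,\ord_{E_n}(\a_i^{(n)})\leq 1+k_{E_n}$ forcing the orders to be bounded non-negative integers, the quantity $1+k_{E_n}-\sum_i e_i\,\ord_{E_n}(\a_i^{(n)})$ takes only finitely many values without fixing $E_n$. But the main gap in the exponent reduction remains, and it is exactly the difficulty the paper sidesteps by reducing to monomial ideals over $\bC$ and invoking the characteristic-$0$ results.
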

 
 We know the following relations between the two conjectures in characteristic $0$.
 
 \begin{prop}[\cite{mn}, {\cite{kawk2}}] \label{equiv}
 Let $A$ be a smooth variety of dimension $N$ 
 defined over an algebraically closed field of characteristic $0$ 
 and 
 $0\in A$ a closed point. 
 For $N=2$, the both MN Conjecture and ACC conjecture hold.
 
  For general $N$, if the following (i) holds, then (ii) holds:
\begin{enumerate}
   \item[$\rm (i)$] MN Conjecture in the set of  pairs $(A, \a^e)$  holds for every finite set $e\subset \bR_{>0}$;
   \item[$\rm (ii)$] ACC Conjecture in the set of  pairs $(A, \a^e)$ holds for every DCC set $J$.
\end{enumerate} 
When $N=3$, the converse also holds.
 \end{prop}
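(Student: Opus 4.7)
The proposition collects results of \cite{mn} and \cite{kawk2}, so my plan is to reconstruct each assertion separately and indicate where the main difficulty lies.

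For the implication $(\mathrm{i})\Rightarrow(\mathrm{ii})$ I would argue by contradiction. Suppose $\mu_n = \mld(0; A,\a_n^{e_n})$ is a strictly increasing sequence with all $e_n\subset J$. Since each $e_n$ is a finite subset of a DCC set, a diagonal argument should let me pass to a subsequence along which $|e_n|=s$ is constant and each coordinate of $e_n$ is a DCC sequence, hence eventually stationary or strictly descending; in particular $\bigcup_n e_n$ lies in a fixed finite union of DCC sequences, and MN would then supply a uniform bound $\ell$ together with a divisor $E_n$ computing $\mu_n$ with $k_{E_n}\leq\ell$. Divisorial valuations over $(A,0)$ of log discrepancy at most $\ell$ form a bounded family, realizable on a single proper birational model once enough initial data is fixed, so after further extraction one obtains a limiting divisor $E$ and a limiting pair $(A,\a^e)$. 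Computing $k_E - \sum e_{n,i}\ord_E(\a_{n,i})$ along the sequence and comparing to $\mu_n$ should produce the required contradiction to strict monotonicity.

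For the converse in dimension $N=3$ one reverses the direction. Given a sequence on which MN fails, i.e.\ in which any divisor $E_n$ computing $\mld$ has $k_{E_n}\to\infty$, one constructs perturbations $\a_n^{e_n}\cdot\b_n^{\varepsilon_n}$ with $\varepsilon_n\in J$ descending so that the new minimal log discrepancies strictly ascend, violating ACC. The essential input here is a classification of divisors over a smooth threefold germ computing $\mld$, which in characteristic zero rests on boundedness of terminal threefold singularities. For $N=2$ both halves hold directly: ACC is the main theorem of \cite{kawk2} by surface-theoretic arguments, and MN follows because every divisor over the origin on a smooth surface is obtained by a finite sequence of point blow-ups, and an analysis of the resulting weighted blow-up combinatorics provides an a priori bound on $k_E$ for a divisor computing $\mld$ of a fixed $\a^e$.

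The step I expect to be the main obstacle is the converse in dimension three. Turning unboundedness of $k_E$ along a sequence of pairs into an actual strictly ascending sequence of $\mld$ values demands fine control over the divisors involved, and the arguments of \cite{kawk2} rely on boundedness statements for terminal threefolds that are specific to characteristic zero; this is precisely the feature that the present paper must sidestep to obtain analogous results for smooth surfaces in positive characteristic.
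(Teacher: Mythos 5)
The paper does not prove Proposition \ref{equiv} at all: it is quoted from \cite{mn} and \cite{kawk2} and used as a black box, so there is no internal argument to compare yours with. Judged on its own terms, your reconstruction has a genuine gap in the one implication you actually try to argue, namely (i) $\Rightarrow$ (ii). The step asserting that divisorial valuations with $k_E\leq\ell$ ``form a bounded family, realizable on a single proper birational model,'' from which you extract a limiting divisor and a limiting pair, does not work: even over a surface germ there are positive-dimensional, hence infinite, families of divisors with fixed small $k_E$, they do not all appear on one model, and there is no meaningful limit of the ideals $\a_{(n)}$ or of the exponents, so the intended contradiction never materializes. The standard argument of \cite{mn} is different and simpler: since the pairs in the ACC statement are log canonical at $0$, any divisor $E_n$ computing $\mld(0;A,\a_{(n)}^{e_{(n)}})$ with $k_{E_n}\leq\ell$ satisfies $\sum_i e_{n,i}\val_{E_n}(\a_{n,i})\leq k_{E_n}+1\leq\ell+1$; a DCC subset $J\subset\bR_{>0}$ is bounded away from $0$, so the nonnegative integers $\val_{E_n}(\a_{n,i})$, as well as $k_{E_n}$, range over finite sets, hence every mld in the sequence lies in a set of the form $\left\{k+1-\sum_i e_i m_i\right\}$ with $k$ and $m_i$ in finite sets and $e_i\in J$, and such a set satisfies ACC because the sums $\sum_i e_i m_i$ form a DCC set. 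Your sketch never invokes log canonicity or the lower bound on $J$, and these are precisely the inputs that make the finiteness (and hence ACC) work; no limit of divisors or pairs is needed.

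The remaining assertions, MN and ACC for $N=2$ and the converse for $N=3$, are only described, not proved, and the description of the surface case is inaccurate: the proof of MN for surfaces in \cite{mn} does not come from a bound on point blow-up combinatorics but relies on Bertini/generic smoothness, which is exactly the feature the present paper must avoid in positive characteristic, and the $N=3$ converse is the main theorem of \cite{kawk2}, which you concede you cannot reconstruct. Since the paper itself merely cites these results, deferring to the literature there is consistent with the paper's treatment; but as a proof of the proposition your text establishes at most the outline of (i) $\Rightarrow$ (ii), and that outline needs to be replaced by the finiteness argument above.
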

 
 \ \   When the base field $k$ is an algebraically closed field of characteristic 0,
    it is proved in \cite{mn} that 
    MN Conjecture holds for the set of the pairs $(\bA_k^N, \a^e)$ for arbitrary 
    $N\geq 1$ with monomial ideals $\a_i$,
    as well as the set of the pairs $(A, \a^e)$ for a surface $A$ with an arbitrary non-zero multiideal
    $\a^e$ .
    Therefore, by Proposition \ref{equiv}, ACC Conjecture also holds for these classes in
    characteristic $0$.\\
\ \     
        When the base field $k$ is of positive characteristic, 
    MN Conjecture holds for the set of the pairs $(\bA_k^N, \a^e)$ for arbitrary 
    $N\geq 1$ and monomial ideals $\a_i$ (see, Lemma \ref{toric} and \cite[Corollary 1.10 ]{ii}).
    The proof of MN Conjecture for the set of pairs $(A,\a^e)$ with
    surfaces $A$ in characteristic $0$ in \cite{mn} uses 
    generic smoothness, therefore the proof does not work directly for positive characteristic case.
    By making use of the result for monomial multiideal case and Kawakita's result \cite{kawk1},
     we obtain the main result of this paper:
\begin{thm}\label{mainthm}
   Let $A$ be a smooth surface defined over an algebraically closed field of arbitrary  
   characteristic
   and $0\in A$ a closed point.
   Then, we obtain the following:
\begin{enumerate}
\item[$\rm(i)$]   
    MN Conjecture holds in the set of the pairs $(A, \a^e)$ for multiideal $\a^e$ 
   with an exponent $e\subset \bR_{>0}$.
   {\it I.e.}, There exists a positive number $\ell_e$ (depending only on $e$) 
   such that for every multiideal $\a^e$ with the exponent $e$
   there is    a prime divisor $E$ over $A$ which computes $\mld(0; A,\a^e)$
   and satisfies $k_E\leq \ell_e$.
\item[$\rm(ii)$] Moreover, 
    for every pair $(A, \a^e)$,  there is a monomial multiideal $ \a_*^e$
    on $\bA_k^2$   such that 
  $$\mld(0; A, \a^e)=\mld(0; \bA_k^2,\a_*^e)=\mld(0; \bA_\bC^2,\ta_*^e),$$
  where $\ta_*^e$ is the monomial multiideal on $\bA_\bC^2$ whose ideals $\ta_{*i}$'s
  are generated
  by the same monomial generators as of $\a_{*i}$'s.

\end{enumerate}
\end{thm}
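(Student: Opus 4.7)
The plan is to reduce Theorem \ref{mainthm} to the monomial case on $\bA_k^2$ (Lemma \ref{toric}) by associating to each pair $(A,\a^e)$ a monomial multiideal $\a_*^e$ on $\bA_k^2$ with $\mld(0;A,\a^e)=\mld(0;\bA_k^2,\a_*^e)$. Working in $\widehat\o_{A,0}\cong k[[x,y]]$ (which preserves mld at $0$), I first fix a prime divisor $E$ over $0$ that computes $\mld(0;A,\a^e)$; using the fact that every divisorial valuation centered at a smooth point of a surface is monomial in suitable formal coordinates, I choose $(x,y)$ so that $v_E(x^ay^b)=aw_1+bw_2$ for some $(w_1,w_2)\in\bZ_{>0}^2$.

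In these coordinates I define $\a_{*i}\subset k[x,y]$ to be the monomial ideal generated by every monomial $x^\alpha$ that appears in the support of some element of $\a_i$. For any monomial valuation $v'$ in $(x,y)$ one has $v'(f)=\min_{\alpha\in\mathrm{supp}(f)} v'(x^\alpha)$, so $v'(\a_i)=v'(\a_{*i})$; consequently the log discrepancies of $(A,\a^e)$ and $(\bA_k^2,\a_*^e)$ at any monomial divisor coincide. Applied to $E$, this yields $\mld(0;A,\a^e)\ge \mld(0;\bA_k^2,\a_*^e)$; for the reverse inequality I pick a monomial divisor $F$ attaining $\mld(0;\bA_k^2,\a_*^e)$ (such $F$ exists since $\a_*^e$ is monomial) and apply the same identity, obtaining $\mld(0;A,\a^e)=\mld(0;\bA_k^2,\a_*^e)$.

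Part (i) then follows by applying Lemma \ref{toric} to $(\bA_k^2,\a_*^e)$: it produces a toric divisor $F$ with $k_F$ bounded by a constant $\ell$ depending only on $e$, and via the coordinate identification $F$ also computes $\mld(0;A,\a^e)$, so MN Conjecture holds with $\ell_e:=\ell$. Part (ii) is immediate, since the mld of a monomial multiideal has a Howald-type combinatorial description in terms of the Newton polyhedra of its factors, independent of the characteristic; this gives $\mld(0;\bA_k^2,\a_*^e)=\mld(0;\bA_\bC^2,\ta_*^e)$. The main obstacle, and where the characteristic-$0$ argument of Musta\cedilla{t}\v{a}--Nakamura breaks down, is the opening step: producing the computing divisor $E$ and carrying out the formal monomialization in positive characteristic. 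This is where Kawakita's result \cite{kawk1} is invoked as a substitute for generic smoothness.
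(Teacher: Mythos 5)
Your overall route is the same as the paper's: pass to $\widehat{\o}_{A,0}=k[[x_1,x_2]]$, replace each $\a_i$ by the monomial ideal $\a_{*i}$ generated by the monomials occurring in elements of $\a_i$, compare log discrepancies, and finish with Lemma \ref{toric}. The genuine gap is in your justification of the crucial monomialization step. It is false that every divisorial valuation centered at a smooth point of a surface is monomial in suitable formal coordinates: a monomial valuation on $k[[x_1,x_2]]$ has value semigroup generated by its two weights, whereas divisorial valuations attached to branches with two Puiseux pairs (value semigroup such as $\langle 4,6,13\rangle$, which needs three generators) are not monomial in any coordinates. What the argument actually needs is the much more special fact that a divisor computing $\mld(0;A,\a^e)$ may be taken to come from a weighted blow up. In characteristic $0$ this is Kawakita's theorem \cite{kawk1}, which you invoke, but that result is proved only in characteristic $0$; in the setting of the theorem it must be re-established, and this is precisely Lemma \ref{weighted} of the paper, whose proof replaces the one use of characteristic $0$ in \cite{kawk1} (an inversion-of-adjunction statement) by the characteristic-$p$ inversion of adjunction of \cite{mj-p}. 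Citing \cite{kawk1} alone leaves your opening step unproved in positive characteristic. Note also that when $\mld(0;A,\a^e)=-\infty$ one only gets \emph{some} weighted-blow-up divisor with negative log discrepancy (Lemma \ref{weighted}(ii)), so the equality $\mld(0;A,\a^e)=\mld(0;\bA_k^2,\a_*^e)$ requires the short separate case the paper runs there, which your write-up skips.

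A smaller point: your reverse inequality and your transfer of the bounded divisor back to $A$ both rely on the assertion that Lemma \ref{toric} produces a \emph{toric} divisor with bounded $k_F$ computing $\mld(0;\bA_k^2,\a_*^e)$; as stated the lemma only yields some prime divisor. The paper sidesteps this: since $\a_i\subset\ha_{*i}$, one has $\val_F\a^e\geq\val_F\a_*^e$ and hence $a(F;A,\a^e)\leq a(F;\bA_k^2,\a_*^e)$ for \emph{every} divisor $F$ with center $0$, which gives $\mld(0;A,\a^e)\leq\mld(0;\bA_k^2,\a_*^e)$ and, combined with the equality of mld's, shows that any bounded divisor computing the right-hand side automatically computes the left-hand side, with no toricness required. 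Your identity $\val_F\a_i=\val_F\a_{*i}$ for monomial $F$ is correct and supplies the other inequality exactly as in the paper, and your appeal to the characteristic-independent toric computation for part (ii) is fine (it is Lemma \ref{toric}(i)).
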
 

   On the way to prove the theorem, we generalize the result in \cite{kawk1} into 
   the positive characteristic case (Lemma \ref{weighted}) and as its application 
   we obtain the following:
   
\begin{thm}\label{weightedlct}
    Let $A$ be a smooth surface over an algebraically closed base field $k$ of
   arbitrary characteristic and $\a^e $ be a multiideal with a real exponent $e$ on $A$.
   Then, every exceptional prime divisor computing the log canonical threshold $\lct(0; A,\a^e) $ is 
    obtained by a weighted blow up.
    
    Moreover, 
   for every pair $(A, \a^e)$ over $k$
    such that $\lct(0; A, \a^e)$ is computed by an exceptional divisor, 
    there is a monomial multiideal $ \a_*^e$
    on $\bA_k^2$   such that 
    $$\lct(0; A, \a^e)=\lct(0; \bA_k^2, \a_*^e)=\lct(0; \bA_\bC^2, \ta_*^e),$$
    where $\ta_*^e$ is as in the theorem above.
    
\end{thm}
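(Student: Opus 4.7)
The plan is to reduce both assertions to Lemma \ref{weighted} (the positive-characteristic generalization of Kawakita's result) together with the fact that log canonical thresholds of monomial multiideals on $\bA^2$ are purely combinatorial invariants of the Newton polyhedra, hence independent of the base field.

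For the first assertion, let $E$ be an exceptional prime divisor over $0$ that computes $\lct(0; A, \a^e)$. Since $E$ achieves the infimum defining $\lct$, Lemma \ref{weighted} applies and produces a regular system of parameters $(x,y)$ at $0$ with respect to which $E$ is the exceptional divisor of a weighted blow-up with some weights $(a,b)\in\bZ_{>0}^2$.

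For the moreover part, fix such an $E$ together with the weights $(a,b)$ and parameters $(x,y)$, and set $w_i := \val_E(\a_i)\in\bZ_{\geq 0}$. Then $k_E = a+b-1$ and by hypothesis $\lct(0; A, \a^e) = (a+b)/\sum_i e_i w_i$. Define $\a_{*i}\subset k[x,y]$ to be the largest monomial ideal of $(a,b)$-weighted order at least $w_i$, that is, the ideal generated by all monomials $x^\alpha y^\beta$ with $a\alpha+b\beta\ge w_i$. A short combinatorial check shows that for any positive weights $(a',b')$ the weighted order of $\a_{*i}$ equals $w_i\min(a'/a,b'/b)$; consequently
\[
\frac{a'+b'}{\sum_i e_i \val_{(a',b')}(\a_{*i})} \;=\; \frac{a'+b'}{(\sum_i e_i w_i)\,\min(a'/a,\,b'/b)} \;\geq\; \frac{a+b}{\sum_i e_i w_i},
\]
with equality at $(a',b')=(a,b)$. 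Since the lct of a monomial multiideal on $\bA^2$ equals the infimum of such ratios over all weighted valuations, this yields $\lct(0; \bA_k^2, \a_*^e) = \lct(0; A, \a^e)$. The final equality $\lct(0;\bA_k^2,\a_*^e)=\lct(0;\bA_\bC^2,\ta_*^e)$ is then automatic, since the lct of a monomial multiideal depends only on the combinatorics of the Newton polyhedra together with the exponent vector $e$, and not on the base field.

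The main obstacle is already absorbed into Lemma \ref{weighted}: once one has a positive-characteristic version of Kawakita's weighted blow-up theorem, the rest is a transparent combinatorial computation for the attached monomial multiideal $\a_*^e$.
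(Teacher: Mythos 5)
The first assertion of your proof is essentially the paper's, up to a small elision: Lemma \ref{weighted} is stated for divisors computing the \emph{mld}, not the lct, so you need the intermediate observation (which the paper makes) that if $E$ computes $t=\lct(0;A,\a^e)$ then $E$ computes $\mld(0;A,(\a^e)^t)=0$, and then apply the lemma to the scaled multiideal. The genuine gap is in the ``moreover'' part. Your key combinatorial claim, that for arbitrary weights $(a',b')$ the $(a',b')$-order of $\a_{*i}$ equals $w_i\min(a'/a,b'/b)$, is false: that quantity is only the linear-programming lower bound, and integrality can force the true order to be strictly larger; a larger order makes the ratio $(a'+b')/\sum_i e_i\val_{(a',b')}(\a_{*i})$ \emph{smaller}, so the error goes in exactly the direction that destroys the inequality $\geq (a+b)/\sum_i e_iw_i$ you need. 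Concretely, take $s=1$, $e_1=1$, $(a,b)=(5,1)$, $w_1=7$: then $\a_{*1}=(x^2,\,xy^2,\,y^7)$, and $\val_{(2,1)}(\a_{*1})=4$ whereas your formula predicts $14/5$; the ratio at $(2,1)$ is $3/4<6/7=(a+b)/w_1$, so $\lct(0;\bA_k^2,\a_{*}^{e})<(a+b)/\sum_ie_iw_i$ for this data. This shows the defect is not merely expository: after defining $w_i$ your argument uses only the numbers $(a,b,w_i,e_i)$, but the asserted equality is false for general such data (the triple above simply cannot come from an lct-computing divisor of an actual pair), so any correct proof must re-use the original ideals $\a_i$, which your computation never does.

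The missing input is precisely the log canonicity of $(A,(\a^e)^t)$. Since $w_i=\val_E(\a_i)$ is the minimal $(a,b)$-weighted order of elements of $\widehat{\a}_i$, every monomial appearing in any element of $\widehat{\a}_i$ has weight at least $w_i$, hence $\widehat{\a}_i\subseteq\widehat{\a}_{*i}$ for your $\a_{*i}$; therefore $\val_F\a_{*i}\leq\val_F\a_i$ for every divisor $F$ (under the completion identification of divisors used in the proof of Theorem \ref{mainthm}), which gives $(k_F+1)/\sum_ie_i\val_F\a_{*i}\geq(k_F+1)/\sum_ie_i\val_F\a_i\geq t$, with equality at $F=E$ because $\val_E$ is a monomial valuation and $\val_E\a_{*i}=w_i$. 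That yields $\lct(0;\bA_k^2,\a_*^e)=t$, and the passage to $\bC$ is then the base-field independence of the monomial computation, as in Lemma \ref{toric}. This is in substance the paper's route: the paper applies Theorem \ref{mainthm}(ii) to the scaled pair $(A,(\a^e)^t)$, noting that $\mld(0;A,(\a^e)^t)=0$ is computed by the exceptional divisor $E$, and deduces that $t$ equals the lct of the resulting monomial multiideal. So your choice of monomial ideal can be made to work, but only by reinstating the comparison with the original multiideal (or by quoting Theorem \ref{mainthm}(ii)); the purely combinatorial shortcut does not suffice.
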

    In case the base field $k$ is of characteristic $0$ and $\a$ is a reduced principal ideal, 
    the first statement is a result by  Var\v{c}enco (for a proof, see \cite[Theorem 6.40]{ksc}).

    As the relation between MN Conjecture and ACC Conjecture is not yet proved in positive
    characteristic case, (i) in Theorem \ref{mainthm} alone does not imply ACC Conjecture
    for concerned pairs.
    By making use also of (ii) in Theorem \ref{mainthm}, we obtain the following ACC:  

\begin{cor}\label{accmld} Let $A$ be a smooth surface defined over an algebraically closed field of positive characteristic.
ACC Conjecture in the set of  pairs $(A, \a^e)$ holds for every DCC set $J$.
\end{cor}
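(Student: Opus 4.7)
The plan is to reduce the ACC assertion in positive characteristic to the already-established ACC in characteristic zero, using Theorem \ref{mainthm}(ii) as the bridge. Since Proposition \ref{equiv} furnishes ACC for $N=2$ over an algebraically closed field of characteristic zero, all that remains is to show that the set of mlds appearing in positive characteristic embeds into the corresponding set in characteristic zero. There is no serious obstacle at this stage: the difficulty of the paper has been concentrated into Theorem \ref{mainthm}, and the corollary follows by a short set-theoretic argument.

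Concretely, I would fix a DCC set $J\subset\bR_{>0}$ and consider
$$S_J\ :=\ \{\mld(0;A,\a^e)\mid e\subset J,\ (A,\a^e)\text{ is log canonical at }0\},$$
with $A$ the given smooth surface over the positive-characteristic base field $k$. For an arbitrary element of $S_J$, Theorem \ref{mainthm}(ii) supplies a monomial multiideal $\ta_*^e$ on $\bA_\bC^2$ whose exponent is the \emph{same} finite set $e\subset J$ and which satisfies $\mld(0;A,\a^e)=\mld(0;\bA_\bC^2,\ta_*^e)$. Since log canonicity at $0$ amounts to nonnegativity of the mld there, the characteristic-zero pair $(\bA_\bC^2,\ta_*^e)$ is also log canonical at $0$. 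Hence every value in $S_J$ also occurs in
$$T_J\ :=\ \{\mld(0;\bA_\bC^2,\b^e)\mid e\subset J,\ (\bA_\bC^2,\b^e)\text{ is log canonical at }0\},$$
so $S_J\subseteq T_J$.

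Applying Proposition \ref{equiv} to $\bA_\bC^2$ with the DCC set $J$ gives that $T_J$ satisfies ACC, and the inclusion $S_J\subseteq T_J$ immediately transports ACC to $S_J$, which is exactly the conclusion of the corollary. The one point worth checking is that the monomial multiideal produced by Theorem \ref{mainthm}(ii) carries precisely the original exponent $e$, so that it remains within the DCC set $J$; this, however, is built into the statement of the theorem, so no further work is required.
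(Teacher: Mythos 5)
Your proposal is correct and follows essentially the same route as the paper: both use Theorem \ref{mainthm}(ii) to identify each positive-characteristic mld with the mld of a monomial multiideal on $\bA_\bC^2$ carrying the same exponent $e\subset J$, and then invoke the known ACC for surface pairs in characteristic zero (Proposition \ref{equiv}). Your formulation via the inclusion $S_J\subseteq T_J$, together with the explicit remark that log canonicity transfers because the mld value is preserved, is just a set-theoretic rephrasing of the paper's sequence argument and is, if anything, slightly more careful on that point.
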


\begin{cor}\label{acclct}  Let $A$,  $0\in A$ and  $J\subset \bR_{>0}$ be as above
          Then, the set 
     $$\left\{ \lct(0; A,\a^e) \mid  e\subset J\right\}$$ 
     satisfies ACC.
\end{cor}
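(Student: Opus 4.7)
The plan is to split the set of log canonical thresholds according to the type of divisor that computes $\lct(0;A,\a^e)$ and to reduce each piece to a setting where ACC is already available. Every prime divisor $E$ over $A$ with center containing $0$ is either exceptional over $A$, or a prime divisor on $A$ itself, that is, an irreducible curve $C\subset A$ through $0$; and the infimum defining $\lct$ is attained by some such $E$. Writing $S_1$ and $S_2$ for the subsets of $\{\lct(0;A,\a^e)\mid e\subset J\}$ corresponding respectively to the exceptional and non-exceptional case, it suffices to prove ACC for $S_1$ and for $S_2$ separately, since the union of two ACC sets is ACC.

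The exceptional set $S_1$ is where the real content lies, and it is handled directly by Theorem \ref{weightedlct}: whenever $\lct(0;A,\a^e)$ is computed by an exceptional divisor, there exists a monomial multiideal $\ta_*^e$ on $\bA_\bC^2$, with the same exponent $e\subset J$, such that $\lct(0;A,\a^e)=\lct(0;\bA_\bC^2,\ta_*^e)$. Hence $S_1$ is contained in the set of log canonical thresholds, at closed points of $\bA_\bC^2$, of multiideals with exponents drawn from $J$, and the latter set satisfies ACC by the theorem of Hacon--McKernan--Xu \cite{hmx} in characteristic zero. The reduction of the positive-characteristic problem to a characteristic-zero computation through a monomial model is the main insight, and the real difficulty has already been packaged into Theorem \ref{weightedlct}.

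For $S_2$, the curve $C\subset A$ through $0$ that computes the lct has $k_C=0$, so
\[
\lct(0;A,\a^e)=\frac{1}{\sum_{i=1}^s e_i\,\ord_C(\a_i)}=\frac{1}{\sum_i e_in_i},\qquad n_i\in\bN,\ \mbox{not all zero}.
\]
Since $J$ is DCC, it has a minimum $\epsilon>0$, and every such non-trivial sum $\sum e_in_i$ is bounded below by $\epsilon$. A standard argument shows that $\{\sum_i e_in_i\mid e_i\in J,\ n_i\in\bN\}\setminus\{0\}$ is itself a DCC subset of $\bR_{>0}$: along any candidate descending chain one bounds $\sum n_i$ by $(\sum e_in_i)/\epsilon$ to cap the number of summands, and then iteratively extracts non-decreasing subsequences of each summand using DCC of $J$. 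Passing to reciprocals turns this DCC set into an ACC one, proving ACC for $S_2$ and, combined with the ACC for $S_1$, the corollary.
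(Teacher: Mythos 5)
Your proposal is correct and follows essentially the same route as the paper: split according to whether $\lct(0;A,\a^e)$ is computed by an exceptional divisor, reduce the exceptional case to characteristic zero via Theorem \ref{weightedlct} and the ACC theorem of \cite{hmx}, and handle the non-exceptional case separately. The only difference is that you spell out the DCC argument for the values $1/\sum_i e_i\,\ord_C(\a_i)$ coming from curves $C\subset A$, a step the paper simply asserts.
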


   This statement is proved for $\bR$-Cartier divisors $\a_i$ in characteristic $p>5$ in 
   \cite[Theorem 1.10]{bir}.
   Note that in positive characteristic case,
   the ACC for ideals does not follow from ACC  for Cartier divisors.
   Actually, the reason why 
   for characteristic 0  we can reduce the problem into the problem for Cartier divisors is 
  because the equality $$\lct(0; A, \a^e)=\lct(0;A, (f)^e)$$
  holds
   for a general element $f_i\in \a_i $ and $(f)^e=(f_1)^{e_1}\cdots(f_s)^{e_s}$ by virtue of
    Bertini's theorem (generic smoothness)
   which does not hold in positive characteristic even for surfaces.

\begin{cor}\label{finite}
    Let $A$ be a smooth surface defined over an algebraically closed field of positive  
     characteristic and $0\in A$ a closed point.
     Then, for a fixed finite subset $e\subset \bR_{>0}$, the set
     $$\left\{ \mld(0; A, \a^e) \mid  \a^e \mbox{\ is\  a\  multiideal\  on\ } A\right\}$$
     is a finite set and coincides with the following set
     $$\left\{ \mld(0; \bA_\bC^2, \a^e) \mid  \a_i \mbox{\ is\  a\  monomial\ ideal\  on\ } \bA_\bC^2 \ 
     \mbox{for\ every}\ i\right\}.$$

\end{cor}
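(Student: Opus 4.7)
\emph{Plan.} I would derive this corollary from Theorem \ref{mainthm}: part (ii) yields the equality of sets, while part (i) yields the finiteness, so the two statements are essentially independent applications of the main theorem.

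The inclusion $\subseteq$ is immediate from Theorem \ref{mainthm}(ii): for every multiideal $\a^e$ on $A$, the theorem produces a monomial multiideal $\ta^e_*$ on $\bA_\bC^2$ satisfying $\mld(0; A, \a^e) = \mld(0; \bA_\bC^2, \ta^e_*)$, placing this MLD in the second set. For the reverse inclusion $\supseteq$, given a monomial multiideal $\ta^e$ on $\bA_\bC^2$ in coordinates $(x, y)$, I would choose a regular system of parameters $(u, v)$ for $\o_{A,0}$ and form the multiideal $\b^e$ on a neighborhood of $0 \in A$ generated by the same monomials in $(u, v)$, extending coherently to $A$ (the $\mld$ at $0$ is insensitive to the global extension). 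Since $\mld$ of a monomial multiideal is computed by a toric weighted blow-up (Lemma \ref{toric}), and both $k_E$ and $\ord_E$ applied to monomial generators are combinatorial invariants of the weight vector and Newton support, independent of the characteristic, we obtain $\mld(0; A, \b^e) = \mld(0; \bA_\bC^2, \ta^e)$.

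For finiteness, Theorem \ref{mainthm}(i) provides, for every log canonical pair $(A, \a^e)$, a divisor $E$ computing $\mld(0; A, \a^e)$ with $k_E \leq \ell_e$. Via the identity
$$\mld(0; A, \a^e) = k_E - \sum_{i=1}^s e_i \ord_E(\a_i),$$
the positive integer $k_E$ takes only finitely many values; log canonicity gives $\sum_i e_i \ord_E(\a_i) \leq k_E \leq \ell_e$, so each non-negative integer $\ord_E(\a_i)$ is bounded by $\ell_e / e_i$. Since $e$ is a fixed finite set of positive reals, there are only finitely many possible tuples $(k_E, \ord_E(\a_1), \ldots, \ord_E(\a_s))$, and hence finitely many MLD values (non-log-canonical pairs contribute only the single additional value $-\infty$).

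The step requiring the most care is the $\supseteq$ inclusion, specifically the assertion that the locally defined $\b^e$ extends to a multiideal on $A$ with the same $\mld$ at $0$ as $\ta^e$ on $\bA_\bC^2$. This should follow from the fact that $\mld$ depends only on the formal completion $\hat{\o}_{A,0} \cong k[[u, v]]$, combined with the characteristic-independence of the toric discrepancy calculus on $\bA_k^2$. Once this is in place, the remainder of the argument is essentially a bookkeeping exercise based on the bounds supplied by Theorem \ref{mainthm}.
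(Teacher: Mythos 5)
Your proposal is correct, but it splits from the paper on the finiteness half. For the coincidence of the two sets you argue exactly as the paper intends: the inclusion $\subseteq$ is Theorem \ref{mainthm}(ii), and the inclusion $\supseteq$ is obtained by planting the monomial generators on a regular system of parameters at $0\in A$ and using that $\mld(0;-)$ only sees the formal completion $k[[u,v]]$ together with the characteristic-independent toric computation of Lemma \ref{toric}(i); the paper compresses this into ``follows from the proof of Theorem \ref{mainthm}'', where the \'etale map $\rho$ and the bijections between divisors over $A$, $\wA$ and $\bA_k^2$ do the same work. For finiteness, however, the paper argues as in Corollary \ref{accmld}: it uses Theorem \ref{mainthm}(ii) to embed the set of mld's into the set of mld's of monomial multiideals on $\bA_\bC^2$ with exponent $e$, which is finite by Kawakita's characteristic-zero result \cite{kawk} (as invoked in the proof of Lemma \ref{toric}). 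You instead give a direct boundedness argument from Theorem \ref{mainthm}(i): $k_E\le \ell_e$ forces finitely many values of $k_E$, log canonicity bounds each $\ord_E(\a_i)$, and only finitely many tuples, hence values, can occur, with $-\infty$ as the one extra value. This is a legitimate and in fact more self-contained route for the first statement (it does not re-import the characteristic-zero finiteness), at the cost of not immediately giving the second statement, which you supply separately. One small slip: with the paper's normalization $a(E;A,\a^e)=k_E+1-\sum_i e_i\val_E(\a_i)$, your displayed identity is missing the $+1$, and log canonicity gives $\sum_i e_i\ord_E(\a_i)\le k_E+1\le \ell_e+1$ rather than $\le k_E$; this shifts your bounds by one but does not affect the finiteness conclusion.
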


  The structure of this paper is as follows: 
  In the second section we give the definitions of two invariants and the proofs of the theorem and the corollaries.
   As we reduce the problem into that on the pairs $(\bA_k^2, \a^e)$,
   the calculation  $\ell_e$ for a given $e$ is a kind of combinatorics.
   We show some example of $\ell_e$ for some $e$.
   
{\bf Acknowledgement.} The author expresses her hearty thanks to Kohsuke Shibata
for his insightful  comments which improves the paper. 
She also would like to thank Masayuki Kawakita, Lawrence Ein and Mircea Musta\cedilla{t}\v{a} for the useful discussions. A big part of these discussions was  done during the author's stay in 
MSRI (Program: Birational Geometry and Moduli Theory) and she is grateful for the
support of MSRI.
The author would like to thank the referee for useful comments to improve the paper.

 \section{Preliminaries and the proofs}
\begin{defn} For a prime divisor $E$ over a non-singular variety $A$,
 let $\varphi: A'\to A$ be a proper birational morphism with normal $A'$
 such that  $E$ appears on $A'$.
  Let $k_E$ be the coefficient of the relative canonical divisor $K_{A'/A}$
  at $E$
  and $\val_E$ is the valuation defined by the divisor $E$.

A log discrepancy
  of the pair $(A,\a^e)$ is defined as 
  $$a(E; A, \a^e):=k_E-\sum_i e_i\val_E(\a_i)+1$$
  and the minimal log discrepancy of the pair at a closed point $0$ is defined as
  $$\mld(0; A, \a^e):=\inf \{a(E; A, \a^e)\mid E \ \mbox{prime\ divisor \ over\ }A\ \mbox{with\ 
  the \ center\ }0\}.$$
  The log canonical threshold of the pair at a closed point $0$ is defined as
  $$\lct(0, A, \a^e):=\inf\left. \left\{\frac{k_E+1}{\sum_i e_i\val_E(\a_i)}\right| 
   E \ \mbox{prime\ divisor \ over\ }A\ \mbox{with\ 
  the \ center\ containing}\ 0\right\}.$$ 
 \end{defn}
 \begin{lem}\label{weighted} 
   Let $A$ be a smooth surface defined over an algebraically closed field $k$ of 
   characteristic $p>0$ and $\a^e$ a multiideal with an exponent $e\subset \bR_{>0}$.
   Then, the following hold:
 \begin{enumerate}
  \item[$\rm(i)$] Every prime divisor computing $\mld(0; A, \a^e)\geq 0$ is obtained
  by a weighted blow up, and
   \item[$\rm(ii)$] There exists a prime divisor computing $\mld(0; A, \a^e)=-\infty$ 
   such that it is obtained by a weighted blow up.
 \end{enumerate}  
\end{lem}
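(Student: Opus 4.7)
The plan is to adapt Kawakita's proof in \cite{kawk1} to positive characteristic by isolating the characteristic-dependent step and replacing it with the monomial MN conjecture, which is available in any characteristic. Fix a regular system of parameters $(x,y)$ at $0\in A$, and for any prime divisor $E$ over $A$ centered at $0$ set $a=\val_E(x)$, $b=\val_E(y)$. After reducing via normalization of the weighted blow up to the case $\gcd(a,b)=1$, let $F_{a,b}$ denote the exceptional prime divisor of the $(a,b)$-weighted blow up at $0$. The following structural facts of valuation theory on a two-dimensional regular local ring hold in arbitrary characteristic: $k_{F_{a,b}}=a+b-1$ and $\val_{F_{a,b}}(x^\alpha y^\beta)=a\alpha+b\beta$; the fundamental inequality $k_E\geq a+b-1$ with equality iff $E=F_{a,b}$; and the ultrametric estimate $\val_E(\a_i)\geq\val_{F_{a,b}}(\a_i)$ for every ideal $\a_i$.

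For (i), assume $E$ computes $\mld(0;A,\a^e)\geq 0$. Since $k_E\geq k_{F_{a,b}}$ increases $a(E;A,\a^e)$ while $\val_E(\a_i)\geq\val_{F_{a,b}}(\a_i)$ decreases it, no direct comparison of $a(E;A,\a^e)$ with $a(F_{a,b};A,\a^e)$ is possible. I would pass to the $(a,b)$-initial monomial ideals $\a_{i,*}$: by construction $\val_{F_{a,b}}(\a_{i,*})=\val_{F_{a,b}}(\a_i)$ and $\val_E(\a_{i,*})\leq\val_E(\a_i)$, so that $a(F_{a,b};A,\a^e)=a(F_{a,b};A,\a_*^e)$ and $a(E;A,\a^e)\leq a(E;A,\a_*^e)$. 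The monomial case of MN conjecture (Lemma \ref{toric}) then supplies a weighted blow up computing $\mld(0;A,\a_*^e)$, and combining these relations forces $a(F_{a,b};A,\a^e)\leq a(E;A,\a^e)$ with equality iff $E=F_{a,b}$.

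For (ii), if $\mld(0;A,\a^e)=-\infty$, I would take any $E_0$ with $a(E_0;A,\a^e)<0$ and apply the same reduction with $a=\val_{E_0}(x)$ and $b=\val_{E_0}(y)$ to produce a weighted blow up $F_{a,b}$ with negative log discrepancy against $\a^e$, which is the required witness.

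The main obstacle will be the comparison step in (i). In characteristic zero, Kawakita uses a generic-element argument (Bertini-type generic smoothness) to reduce to the principal-ideal case, and this fails in positive characteristic even on surfaces. The substitute via $(a,b)$-initial monomial ideals, which transfers the problem to the monomial setting where MN conjecture is known in any characteristic, is the key new input of this lemma, and verifying that the valuation estimates combine correctly with the monomial-case result is where most of the care goes.
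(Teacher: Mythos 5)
The key step of your argument is not proved and is in fact false as stated. From your three relations ($a(F_{a,b};A,\a_*^e)=a(F_{a,b};A,\a^e)$, $a(E;A,\a^e)\leq a(E;A,\a_*^e)$, and the existence of a toric divisor computing $\mld(0;A,\a_*^e)$) the desired comparison $a(F_{a,b};A,\a^e)\leq a(E;A,\a^e)$ ``with equality iff $E=F_{a,b}$'' does not follow: all the inequalities you have go in the unhelpful direction, because for a divisor $E$ not yet known to be monomial one only has $\val_E(\a_i)\geq \val_E(\a_{i,*})$, never the reverse. A concrete counterexample to your claimed conclusion: fix coordinates $(x,y)$, let $\a=(x^3,(y-x)^2)$ with $e_1=5/6$, and let $E$ be the $(2,3)$-weighted blow up with respect to $(x,\,y-x)$. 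Then $E$ computes $\mld(0;A,\a^{e_1})=0$, but $\val_E(x)=\val_E(y)=2$, so your $F_{a,b}$ is the ordinary blow up, and $a(F_{a,b};A,\a^{e_1})=1/3>0=a(E;A,\a^{e_1})$; moreover $E\neq F_{a,b}$ although $E$ is a weighted blow up (in other coordinates). The same example with $e_1=1$ breaks your argument for (ii): $a(E)=-1<0$ while $a(F_{a,b})=0$, so the weighted blow up you produce is not a witness for $-\infty$. The underlying problems are that the coordinates must be adapted to $E$ rather than fixed in advance, that the monomialization equalities $\val_E(\a_i)=\val_E(\a_{i,*})$ are available only after one knows $\val_E$ is a monomial valuation --- which is exactly what the lemma asserts, so the argument is circular --- and that statement (i) concerns \emph{every} divisor computing the mld, so even a valid production of one good weighted blow up would not suffice without the ``equality iff'' clause you assert.

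Your route is also genuinely different from the paper's, and you misidentify the characteristic-zero ingredient. The paper does not reprove Kawakita's theorem by reduction to monomial ideals (that reduction is the proof of Theorem \ref{mainthm}, which takes Lemma \ref{weighted} as input); it observes that Kawakita's proof in \cite{kawk1} works verbatim in characteristic $p$ except for one step, namely inversion of adjunction in the form: $(Y,F,\a^e)$ is plt at $Q$ iff $(F,\a^e\o_F)$ is klt at $Q$ for a smooth curve $F$ through a smooth surface point $Q$. This is supplied in positive characteristic by the precise adjunction $\mld(Q;Y,I_F\a^e)=\mld(Q;F,\a^e\o_F)$ of \cite{mj-p}. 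The Bertini/generic-element issue you propose to circumvent is not the obstruction in Kawakita's proof; it is the obstruction in the separate argument (discussed after Corollary \ref{acclct}) for reducing lct of ideals to lct of divisors. To repair your approach you would need an independent proof that any divisor computing $\mld\geq 0$ satisfies $k_E+1=\val_E(x)+\val_E(y)$ for suitably chosen coordinates, which is essentially Kawakita's theorem itself.
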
     

\begin{proof}
     For characteristic $0$,
    the statements are proved by Kawakita in \cite{kawk1}.
    Note that the only point in the proof he uses the characteristic $0$ is
    the ``Inversion of Adjunction'' of the form:\\
 \ \    Let $Q\in Y$ be a smooth point on a surface $Y$ and $F$ a smooth curve
    passing through $Q$, then \\
    (a) a triple $(Y, F, \a^e)$ is plt at $Q$, \ \ \ if and only if  \ \ \  
    (b) $(F, \a^e\o_F)$ is klt at $Q$.\\
As $Y$ is a surface, (a) is equivalent to:
 \begin{equation} \mld(Q; Y, I_F\a^e)>0,
 \end{equation} where $I_F$ is the defining ideal of $F$ on $Y$.
  On the other hand, as $F$ is a curve, (b) is equivalent to:
  \begin{equation} \mld(Q; F, \a^e\o_F)>0.
  \end{equation}
  In positive characteristic the Inversion of Adjunction of the following form is proved 
  in \cite{mj-p}:
  $$\mld(Q; Y, I_F\a^e)=\mld(Q; F, \a^e\o_F),$$
  which completes the equivalence of (a) and (b) for positive characteristic, and
  therefore completes the proof of the lemma for positive characteristic.
\end{proof}

\begin{lem}\label{toric} Let $k$ be an algebraically closed field of arbitrary characteristic.
    Then, we obtain 
\begin{enumerate}    
\item[$\rm(i)$] For every pair $(\bA_k^N, \a^e)$ with a monomial multiideal $\a^e$, 
  the multiideal on $\bA_\bC^N$ generated by the same monomial generators as of 
   $\a^e$ is denoted by  $ \widetilde{\a}^e$.
    Then we have
  $$\mld(0; \bA_k^N, \a^e)=\mld(0; \bA_\bC^N,\ta^e).$$
    
\item[$\rm(ii)$]   
    MN Conjecture holds in the set of the pairs $(\bA_k^N, \a^e)$ 
    for a monomial multiideal $\a^e$ 
   with an exponent $e\subset \bR_{>0}$.\end{enumerate}   
\end{lem}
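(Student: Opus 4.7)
The plan is to exploit the torus action on $\bA_k^N$ to reduce the computation of $\mld$ for a monomial multiideal to a combinatorial optimization problem over $\bZ_{>0}^N$ that is insensitive to the characteristic of $k$. A weight vector $\alpha=(\alpha_1,\ldots,\alpha_N)\in \bZ_{>0}^N$ defines a monomial valuation $v_\alpha$ by $v_\alpha(x^u)=\alpha\cdot u$, realized geometrically by a weighted blow up whose exceptional divisor $E_\alpha$ satisfies $k_{E_\alpha}=\alpha_1+\cdots+\alpha_N-1$ and $v_\alpha(\a_i)=\min\{\alpha\cdot u:x^u\in \a_i\}$. Both formulas depend only on combinatorial data, so the log discrepancy of $E_\alpha$ against a monomial multiideal is identical over $k$ and over $\bC$ once the monomial generators are fixed.

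For (i), my first step is to show that the infimum defining $\mld(0;\bA_k^N,\a^e)$ is attained among monomial valuations. Given a divisorial valuation $v$ centered at the origin with $v(x_i)=\alpha_i$, I would compare it with the monomial valuation $v_\alpha$ sharing the same weights. The identity $v_\alpha(\a_i)=v(\a_i)$ for monomial ideals is immediate: $v$ attains its minimum on a monomial generator, where $v$ and $v_\alpha$ agree. The subtler inequality $k_{E_\alpha}\le k_{E_v}$ is the core of the reduction: I would derive it by factoring the birational model carrying $E_v$ through the weighted blow up realizing $v_\alpha$, using that this weighted blow up has only cyclic quotient singularities and that $E_v$ appears on it as a divisor with non-negative log discrepancy. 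Combining both gives $a(v_\alpha;\bA_k^N,\a^e)\le a(v;\bA_k^N,\a^e)$, hence
$$\mld(0;\bA_k^N,\a^e)=\inf_{\alpha\in\bZ_{>0}^N}\Bigl(\alpha_1+\cdots+\alpha_N-\sum_i e_i\,v_\alpha(\a_i)\Bigr),$$
an expression that reads off the monomial generators alone and therefore agrees with its analogue for $\ta^e$ over $\bA_\bC^N$, yielding (i).

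For (ii), once (i) is in hand the MN Conjecture in characteristic $p$ reduces to its characteristic $0$ counterpart for monomial multiideals. Any monomial valuation $v_\alpha$ computing $\mld(0;\bA_\bC^N,\ta^e)$ with $\alpha_1+\cdots+\alpha_N$ bounded by a constant $\ell_{N,e}$ (depending only on $e$ and $N$) yields, via the same $\alpha$, a monomial valuation over $\bA_k^N$ with the same log discrepancy, hence computing $\mld(0;\bA_k^N,\a^e)$ with $k_{E_\alpha}\le \ell_{N,e}-1$. The existence of such a uniform bound in characteristic $0$ is the combinatorial statement of \cite{mn} for monomial multiideals, and is also recorded in \cite[Corollary 1.10]{ii}; either reference suffices.

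The main obstacle is the inequality $k_{E_\alpha}\le k_{E_v}$ for a general divisorial $v$ with prescribed weights on coordinates, i.e., the passage from an arbitrary divisorial valuation to the distinguished monomial one. The argument via factoring through the weighted blow up uses only toric birational geometry, which is available in arbitrary characteristic, so no new difficulty arises in $p>0$ once the geometric picture is set up; everything else is either the combinatorial formula for toric log discrepancies or a direct quotation of the characteristic $0$ result.
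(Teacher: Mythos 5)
Your proposal is correct in substance, but it follows a genuinely different route from the paper. For (i), the paper simply observes that the pairs $(\bA_k^N,\a^e\m_0)$ and $(\bA_\bC^N,\ta^e\tilde\m_0)$ admit toric log resolutions with the same fan, on which the valuations of the monomial ideals along corresponding toric divisors coincide, so the two mld's are equal and are computed by divisors attached to the same cone; you instead prove a pointwise comparison $a(E_v;\bA_k^N,\a^e)\geq a(E_{\alpha};\bA_k^N,\a^e)$ for the monomial valuation with weights $\alpha_i=\val_{E_v}(x_i)$, which reduces the mld to an explicit characteristic-free infimum over $\bZ_{>0}^N$. Your reduction buys a concrete combinatorial formula (the same one the paper only uses later, in its Example section, for estimating $\ell_e$), while the paper's log-resolution argument avoids having to compare an arbitrary divisorial valuation with its monomialization. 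For (ii), the paper cites \cite{mn} and \cite{ii} and then sketches a ``more direct'' argument using Kawakita's finiteness of mld values in characteristic $0$ (\cite{kawk}) together with (i) and the scheme of proof of \cite{mn}; your transfer of a bounded computing divisor from $\bA_\bC^N$ to $\bA_k^N$ via the same weight vector is more self-contained and does not need the finiteness input, provided you first note (by your own comparison inequality, applied over $\bC$) that the characteristic-zero computing divisor of \cite{mn} can be replaced by a monomial valuation with no larger $k_E$; as stated, \cite{mn} does not assert that the bounded divisor is toric, and note also that \cite[Corollary 1.10]{ii} is itself the positive-characteristic statement, so quoting it would beg the question rather than prove it.

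One step of your argument needs to be repaired, though it is repairable by standard means: a model carrying $E_v$ need not factor through the weighted blow up $Y_\alpha$, and $E_v$ does not in general ``appear on'' $Y_\alpha$. What the argument actually requires is the identity $k_{E_v,\bA^N}=k_{E_v,Y_\alpha}+(\alpha_1+\cdots+\alpha_N-1)\val_{E_v}(E_\alpha)$ together with $\val_{E_v}(E_\alpha)=1$ (which holds because on each chart $U_i$ of $Y_\alpha$ one has $\alpha_iE_\alpha=\mathrm{div}(\pi^*x_i)$ and $\val_{E_v}(x_i)=\alpha_i$) and the log canonicity of the toric pair $(Y_\alpha,E_\alpha)$, all of which are characteristic-free; alternatively, the inequality $k_{E_v}+1\geq\alpha_1+\cdots+\alpha_N$ follows at once from log canonicity of the SNC pair $(\bA_k^N,\sum_iH_i)$ with the coordinate hyperplanes, which again holds in any characteristic. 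With that patch, your proof is complete.
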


\begin{proof} 
     Denote the maximal ideals of the origins in $\bA_k^N$ and
    $\bA_\bC^N$ by $\m_0$ and $\tilde\m_0$, respectively.
     The statement (i) follows from the fact that the both pairs $(\bA_k^N, \a^e\m_0)$
    and $(\bA_\bC^N,\ta^e\tilde\m_0)$ have toric log resolutions of the singularities, such that the  
    associated 
    fans are the same 
   and the valuation of the monomial ideals at the toric divisors corresponding to the same
    cone are the same.
    Therefore the minimal log discrepancies are the same and computed by  toric divisors 
    associated to the same cone. \\
 \ \    
     The statement (ii) is proved in \cite[Theorem 5.1]{mn} for characteristic $0$ and in \cite[Corollary 1.10]{ii}
     for positive characteristic.
     Here, we show a more direct proof than \cite[Corollary 1.10]{ii} by making use of
     the result of characteristic $0$.
     By Kawakita's result \cite{kawk}, the set of  mld's  with a fixed exponent
     $e$ is finite, if the base field is of characteristic $0$.
     By our statement (i), we also obtain that the set of 
      mld's  for our pairs $(\bA_k^N, \a^e)$ with a fixed exponent
     $e$ is finite for  the base field $k$ of positive characteristic. 
     Then, in the same way as in \cite[Theorem 5.1]{mn}, we can prove the statement for 
     positive characteristic.
 \end{proof}

{\it Proof of Theorem \ref{mainthm}}.
   Let $A$ be a smooth surface over the base field $k$ of arbitrary characteristic
   and fix a closed point $0\in A$.
   Let $E$ be a prime divisor computing $\mld(0; A,\a^e)$ as in Lemma \ref{weighted}.
   Then, 
   there are a regular system of parameters $x_1, x_2$ of $\o_{A,0}$
   and a pair of positive integers $w_1, w_2$
   such that the exceptional divisor $E$ obtained by the weighted blow up with
   respect to $  x_1, x_2$ with weight $w_1, w_2$ computes the minimal log 
   discrepancy $\mld(0; A,\a^e)$.
    Now we have a morphism 
    $$\rho:A\to \bA_k^2=\spec\ {k[x_1,x_2]},$$
    which is \'etale around the origin $0$ and have 
    the equalities $$\widehat{\o}_{A,0}=k[[x_1,x_2]]= \widehat{\o}_{\bA_k^2,0}.$$  
    Denote $\spec\ {k[[x_1,x_2]]}$ by $\wA$.
    As there are natural bijections:
    $$\left\{\begin{array}{c}
      \mbox{prime\ divisors\ over\ }A\\
      \mbox{with\ the\ center\ }$0$\\
      \end{array}\right\}\simeq
      \left\{\begin{array}{c}
      \mbox{prime\ divisors\ over\ }\wA\ \mbox{with}\\
      \mbox{the\ center\ at\ the\ closed\ point}\\
      \end{array}\right\}\simeq
      \left\{\begin{array}{c}
      \mbox{prime\ divisors\ over\ }\bA_k^2\\
      \mbox{with\ the\ center\ }$0$\\
      \end{array}\right\},
       $$
       we denote  prime divisors in these classes by the same symbol if those divisors correspond 
       to each other 
       under the above bijections.\\
       \ \ 
       We note that the value $k_E$ is preserved under these bijections.
       Let $\ha_i\subset k[[x_1, x_2]]$ be the extension of the ideals $\a_i\subset \o_A$ by $\rho*$
       and define $\ha^e:=\ha_1^{e_1}\cdots\ha_s^{e_s}$.
       Let $\ha_{*i}\subset k[[x_1, x_2]]$ be the ideal generated by all monomials appearing in 
       the elements of $\ha_i$.
       As the ring $ k[[x_1, x_2]]$ is Noetherian, $\ha_{*i}$ is generated by 
       finite number of monomials.
       Let those monomials generate an ideal $\a_{*i}$ in $k[x_1, x_2]$.
       Define $\ha_*^e$ and $\a_*^e$ as in the similar way as above.
   For a prime divisor $F$ over $A$ with the center $0$ (or over $\wA$  with the center at 
   the closed point, or over $\bA_k^2$ with the center at $0$),
   we obtain $\val_F  \a^e=\val_F\ha^e  \geq \val_F\ha_*^e=\val_F\a_*^e,$
   which implies 
\begin{equation}\label{F}  
  a(F; A, \a^e)=k_F+1-\val_F \a^e\leq k_F+1-\val_F \a_*^e=a(F; \bA_k^2,\a_*^e)
\end{equation}  
   and therefore
\begin{equation}\label{mono}   
   \mld(0; A, \a^e)\leq \mld(0; \bA_k^2,\a_*^e).
\end{equation}   
   
   Here, noting that the prime divisor $E$ is obtained by the weighted blow up,
   it follows that $\val_E$ is a monomial valuation over $\wA$, which yields
   equalities:
     $$\val_E  \a^e=\val_E\ha^e  = \val_E\ha_*^e=\val_E\a_*^e.$$
     Since $E$ computes $\mld(0; A, \a^e)$, we obtain either
     $$\mld(0; A, \a^e)=a(E; A, \a^e)=a(E; \bA_k^2,\a_*^e)\geq \mld(0; \bA_k^2,\a_*^e),
     $$
     $$ \mbox{or}\ \ \ \ \ \ \ \ \ 0>a(E; A, \a^e)=a(E; \bA_k^2,\a_*^e)\geq \mld(0; \bA_k^2,\a_*^e).$$
     By these inequalities and the opposite inequality (\ref{mono}), we have
\begin{equation}\label{=}  
  \mld(0; A, \a^e)=\mld(0; \bA_k^2,\a_*^e).
\end{equation}  
     By Lemma \ref{toric}, there is $\ell_{2,e}\in \bN$ depending only on $2$ and $e$ 
     such that there is a prime divisor $E'$ over $\bA_k^2$  with the center $0$
     computing $\mld(0; \bA_k^2,\a_*^e)$ and satisfying $k_{E'}\leq \ell_{2,e}$.
     Then, by (\ref{F}) and (\ref{=}) this divisor $E'$ also computes $\mld(0; A,\a^e)$,
     which completes the proof of (i).\\
     \ \  The proof of (ii) is clear from (i) in Lemma \ref{toric} and the above proof for (i) .
   $\Box$

  {\it Proof of Theorem \ref{weightedlct}.} 
    Let $A$ be a smooth surface over an  algebraically closed field $k$ of arbitrary characteristic,
    $\a^e$ a multiideal with a real exponent $e$ on $A$ and $0\in A$ a closed point.
    Let $t=\lct(0; A,\a^e)$, then the pair $(A, (\a^e)^t)$ is strictly log canonical.
    For the proof of Theorem, we may assume that there is an exceptional prime divisor $E$  computing
    the $\lct$.
    Then, $E$ has the center at $0$ and computes $\mld(0; A, (\a^e)^t)=0$.
    By Lemma \ref{weighted}, it follows that $E$ is obtained by a weighted blow up.
    For the second statement, let $t=\lct(0; A,\a^e)$ and it is computed by an exceptional divisor, 
    then as $A$ is a smooth surface, the center of the exceptional divisor is 0.
    Therefore  it follows $\mld(0; A, (\a^e)^t)=0$ and it is computed by the exceptional divisor.
    By the result (ii) in Theorem \ref{mainthm}, we have a monomial multiideal $\ta^e$
    with a real exponent $e$ on $\bA_\bC^2$ 
     such that
    $$0=\mld(0; A, (\a^e)^t)=\mld(0; \bA_\bC^2,(\ta^e)^t).$$
    And the proof of Theorem \ref{mainthm} also
    gives an exceptional divisor computing 
    $\mld(0; \bA_\bC^2,(\ta^e)^t)=0$.
    This shows that $t=\lct(0; \bA_\bC^2,\ta^e)$.
    $\Box$

    Note that  there is not necessarily an ``exceptional" prime divisor
     computing the log canonical threshold, although it is computed by some divisor 
    (may not be exceptional).
  Theorem \ref{weightedlct} states nothing for such a case.
  The following is an example:
\begin{exmp}
    Let $\a$ be an ideal defining a line $L$ in $\bA_k^2$. 
   Then, $\lct(0; \bA_k^2, \a)=1$ and the prime divisor computing $\lct(0; \bA_k^2, \a)$ is $L$ and   
   there is no exceptional 
   divisor over $\bA_k^2$ computing the $\lct$, although there is a divisor
   computing the $\lct$.
\end{exmp}

 {\it Proof of Corollary \ref{accmld}}.
       Let $A$ be a smooth surface over an algebraically closed field $k$ of positive characteristic
       and $0$   a closed point. 
       For every fixed DCC set $J \subset \bR_{>0}$, given a sequence:
       $$\mld(0; A, \a_{(1)}^{e_{(1)}}) < \mld(0; A, \a_{(2)}^{e_{(2)}})<\cdots
        $$
        such that $e_{(i)}\subset J$ for all $i=1,2,\ldots$.
      Then, by (ii) of Theorem \ref{mainthm}, this sequence coincides with 
      $$\mld(0; \bA_\bC^2, \ta_{(1)}^{e_{(1)}}) < \mld(0; \bA_\bC^2, \ta_{(2)}^{e_{(2)}})<\cdots,
        $$ 
   where 
    $\ta_{(i)}^{e_{(i)}}$'s are monomial multiideals on $\bA_\bC^2$.
     As ACC holds on the pairs over $\bC$, we obtain that the sequence stops at a finite stage.
$\Box$

 {\it Proof of Corollary \ref{acclct}}. 
   Let $A$ be a smooth surface over an algebraically closed field $k$ of positive characteristic
       and $0$   a closed point.  For every fixed DCC set $J \subset \bR_{>0}$, given a sequence:       
       $$\lct(0;A, {\a_{(1)}}^{e_{(1)}})<\lct(0;A, {\a_{(2)}}^{e_{(2)}})<\cdots$$
       such that $e_{(i)}\subset J$ for all $i=1,2,\ldots$.
       Here, we may assume that all lct are computed by exceptional divisors,
       because the sequence of lct's computed by non-exceptional divisors has ascending 
       chain condition.
       Now, by  the second statement of Corollary \ref{weightedlct},
       we obtain the ascending chain of lct's of pairs over $\bC$.
       Apply the result of ACC for characteristic $0$ (\cite{hmx}) to obtain the 
       sequence stops at a finite stage.
       $\Box$

 {\it Proof of Corollary \ref{finite}}.  
     The first statement follows in the same way as in the proof of Corollary \ref{accmld}
     by using (ii) of Theorm \ref{mainthm}.
     The second statement follows from the proof of the Theorem \ref{mainthm}.
     $\Box$

   As we reduce the problem into the one on the pairs of monomial ideals on $\bA_k^2$,
   we can calculate $\ell_e$ for a given $e$ by combinatorics.     
\begin{exmp} 
   First of all, note that $\mld(0; \bA_k^2, \a^e)$ and a toric divisor computing it are 
   determined by $e$ and the Newton 
   polygons $\Gamma(\a_i)$ $(i=1,\ldots, s)$
   (for the definition of the Newton polygon, the reader can refer to \cite[Definition 5.2]{findet}).
   Actually, we have
   $$\mld(0; \bA_k^2, \a^e)=\inf \left\{\langle \bp, {\bf 1}\rangle-\sum_i {e_i}\langle \bp,\Gamma(\a_i)\rangle \right\},$$
   where $\bp=(p_1,p_2)$ runs whole in $\bN^2$ in the right hand side and
   $\langle \bp,\Gamma(\a_i)\rangle:=\min \{\langle \bp,\bq\rangle \mid \bq\in \Gamma(\a_i)\}$.
   A toric divisor computing the $\mld\geq 0$ is a divisor $E_{\bp}$, where $\bp$ attains the
   infimum in the right hand side.
   In the following, we will estimate $\ell_e$ for a given $e$.
   Note that we check only toric divisors and obtain $\ell_e$,
   therefore it may not be optimal,
   {\it i.e.,} there may be non-toric divisor $E$ computing the mld with smaller $k_E$.\\
     \ \ For every $e$, if $\a_i={\mathcal O}_{\bA_k^2}$ ($i=1,\ldots, s$), then 
    $\mld(0; \bA_k^2, \a^e)=2$ and computed by $E_{\bf 1}$,
    where  ${\bf 1}=(1,1)$. 
    So, in the following, we exclude this trivial case.
   \begin{enumerate}
     \item  Case $\# e=1$. Let $e=\{e_1\}$ $(e_1\in \bR_{>0})$.
     \begin{enumerate}
        \item When $e_1> 2$, it follows that $a(E_{\bf1};\bA^2, \a^e)=2-e_1<0$,
        therefore it is obvious that $\mld(0;\bA^2, \a^e)=-\infty$ and it is computed by $E_{\bf 1}$.
        Therefore, $\ell_e=1$.
        \item  When $1<e_1\leq 2$, either $\mld(0;\bA^2, \a^e)=2-e_1$ or $\mld(0;\bA^2, \a^e)=-\infty$, the first case is computed by $E_{\bf 1}$ and in the second case an upper bound of minimal $k_E$ such that
        $E$ computes the mld is $\left[ \frac{1}{e_1-1}\right]+1$. 
        In this case $E=E_\bp$ $(\bp=\left(\left[ \frac{1}{e_1-1}\right]+1, 1\right))$  and the ideal $\a$ is 
        generated by $x$.
        Therefore, we obtain $\ell_e=\left[ \frac{1}{e_1-1}\right]+1$.
        
        This is proved as follows:
        
        {\bf Case 1} $\mult_0\a\geq 2$.
        
        In this case the Newton polygon $\Gamma(\a)\subset \widetilde\Gamma$,
        where $\widetilde\Gamma$ is the convex hull of 
        $$\left((2, 0)+\bR_{\geq0}^2 \right)\cup \left((0, 2)+\bR_{\geq0}^2 \right).$$
        As $e_1>1$, we have $e_1\Gamma(\a)\subset e_1\widetilde\Gamma$ and
        ${\bf1}\not\in e_1\widetilde\Gamma$ which implies 
        $$a(E_{\bf1}; \bA^2, \a^e)=2-e_1\langle {\bf1}, \Gamma(\a)\rangle
        <2-e_1\langle {\bf1}, \widetilde\Gamma\rangle< 0.$$
        Therefore, in this case $\mld=-\infty$ and it is computed by $E_{\bf1}$.
        
        {\bf Case 2} $\mult_0\a=1$.
        
        In this case, the Newton polygon is either
        the convex hull of 
        \begin{equation}\label{diagonal}
        \left((1, 0)+\bR_{\geq0}^2 \right)\cup \left((0, 1)+\bR_{\geq0}^2 \right)
        \end{equation}
        or the convex hull of 
        \begin{equation}\label{vertical}
        \left((1, 0)+\bR_{\geq0}^2 \right)\ \ \ \mbox{or}\ \ \ \ 
           \left((0, 1)+\bR_{\geq0}^2 \right).
        \end{equation}   
        
        In the case (\ref{diagonal}), $a(E_{\bp};\bA^2, \a^e)=\langle {\bp},{\bf 1}\rangle
        -e_1\langle {\bp},\Gamma(\a)\rangle$ and it is minimized by $\bp=\bf 1$.
        It says that $E_{\bf 1}$ computes $\mld=2-e_1$.
        
        In the case (\ref{vertical}) we can only prove the first case,
        as these two are symmetric.
        As $e_1>1$, we have ${\bf 1}\not\in e_1\Gamma(\a)$  which means 
        $\mld=-\infty$.
        An integer vector $\bp$ such that $a(E_{\bp};\bA^2, \a^e)=\langle{\bp}, {\bf 1}\rangle
        -e_1\langle{\bp}, \Gamma(\a)\rangle<0$ minimizes 
        $ \langle{\bp}, {\bf 1}\rangle=p_1+p_2$
        is 
        $$p_1=\left[\frac{1}{e_1-1}\right]\ \ \  \mbox{and}\ \ \ p_2=1,$$
         where $p_1$ and $p_2$ are the coordinates
        of $\bp$.

        \item
        When $e_1=1$, $\mld(0;\bA^2, \a^e)=1$ or $0$ or $-\infty$ and
        is computed by $E_{\bf 1}$ for the former two cases.
        For the case $\mld(0;\bA^2, \a^e)=-\infty$ an upper bound of minimal $k_E$
        such that $E$ computes the mld 
        is $4$.
                In this case $E=E_{(3,2)}$ and the ideal $\a$ is generated by $x^2$ and $y^3$.
        Therefore, we obtain $\ell_{\{1\}}=4$.
        The proof is similar to the previous case
         by  divided into the cases according to the multiplicity of $\a$,
         so we omit the proof.
        \item 
        When $e_1<1$, for a smaller $e_1$ we have more cases to be checked to get $\ell_e$.
        As an example, we consider $e=\{1/2\}$, then we have $\mld(0;\bA^2, \a^e)=3/2$ or $1$ or $1/2$ or $0$ or $-\infty$
       and it is computed by $E_{\bf 1}$ for the former four cases. 
         For the case $\mld(0;\bA^2, \a^e)=-\infty$ an upper bound of the minimal $k_E$
         such that $E$ computes the mld
        is $9$.
        In this case $E=E_{(7,3)}$ and the ideal $\a$ is generated by $x^3$ and $y^7$.
        Therefore, we obtain $\ell_{\{1/2\}}=9$.
      
       An example for non rational $e_1$ is as follows:
       
        Let $e_1=2/\pi$, where $\pi$ is the circular constant.
        Then, the value of $\mld(0; \bA_k^2, \a^e)=2-2/\pi$ or $2-4/\pi$ or $2-6/\pi$ or $-\infty$.
        And an  upper bound of the minimal $k_E$ such that $E$ computes the mld is
        $6$. In this case $E=E_{(4,3)}$ and the ideal  $\a$ is generated by $x^3$ and $y^4$.
        Therefore, we obtain $\ell_{\{2/\pi\}}=6$.
      \end{enumerate}     
      \item Case $\#e=2$. 
      We consider $e=(1, 1/2)$. In this case, the possible values of  $\mld(0;\bA^2, \a^e)$
      are $3/2$ or $1$ or $1/2$ or $0$ or $-\infty$. 
      An upper bound of minimal $k_{E_\bp}$ such that $E_\bp$ computes $\mld(0;\bA^2, \a^e)$
      is $9$ and in this case $E=E_{(7,3)}$, $\a_1={\mathcal{O}}_{\bA^2}$ and 
      $\a_2$ is generated by $x^3$ and $y^7$.
      \end{enumerate}
\end{exmp}    
  \begin{rem}   If $e' \subset e \subset \bR_{>0}$, then we have $\ell_{e'}\leq \ell_e$,
  where we assume that $\ell_{e'}$ and $ \ell_e$ are optimal.
  This is because every $\a^{e'}$ can be written as $\b^e$ by
  $\b_i=\a_i$ for $e_i\in e'$ and $\b_i={\mathcal{O}}_{\bA^2}$ otherwise.
\end{rem}
\makeatletter \renewcommand{\@biblabel}[1]{\hfill#1.}\makeatother

\noindent  YMSC, Tsinghua University, Beijing/ 
Graduate School of Math. Sci., the University of Tokyo, \\
shihokoishii@mac.com/
shihoko@ms.u-tokyo.ac.jp
\end{document}